\documentclass{article}
\usepackage{amssymb,amsmath,amsthm}
\usepackage{amsfonts,mathtools}
\usepackage{stackengine}
\stackMath
\usepackage{tikz}
\usepackage{tkz-euclide,subfigure} 
\usepackage{amsthm}
\newtheorem{theorem}{Theorem}[section]

\newtheorem{corollary}[theorem] {Corollary}
\newtheorem{definition}[theorem]{Definition}
\newtheorem{example}[theorem]{Example}

\newtheorem{proposition}[theorem]{Proposition}

\newtheorem*{open problem*}{Open Problem}

\newtheorem{conjecture}{Conjecture}
\voffset=-12mm
\mathsurround=2pt
\parindent=12pt
\parskip= 4.5 pt
\lineskip=3pt
\oddsidemargin=10mm
\evensidemargin=10mm
\topmargin=55pt
\headheight=12pt
\footskip=30pt
\textheight 8.1in
\textwidth=150mm
\raggedbottom
\pagestyle{myheadings}
\hbadness = 10000
\tolerance = 10000

\vspace{5cm}

\begin{document}
	
	\label{'ubf'}  
	\setcounter{page}{1}                                 

	\markboth {\hspace*{-9mm} \centerline{\footnotesize \sc
			Dual of splitting $p$-matroids}
	}
	{ \centerline                           {\footnotesize \sc  
			Amol Narke$^1,$ Prashant Malavadkar $^2,$ Maruti M. Shikare$^3$                                               } \hspace*{-9mm}              
	}

	\begin{center}
		{ 
			{\Large \textbf { \sc Bounds on Path Energy of Graphs
				}
			}
			\\

			\medskip

			{\sc Amol Narkel$^1,$ Prashant Malavadkar$^2,$ Maruti M. Shikare$^3$}\\
			{\footnotesize  Department of Mathematics and Statistics, Dr. Vishwanath Karad MIT-World Peace University, Pune-38, India. }\\
			{\footnotesize e-mail: {\it 1. amol.narke@mitwpu.edu.in 2. prashant.malavadkar@mitwpu.edu.in 3.mmshikare@gmail.com }}
		}
	\end{center}

	\thispagestyle{empty}

	\hrulefill

	\begin{abstract}  
		{\footnotesize 
			Given a graph $M,$ path eigenvalues are eigenvalues of its path matrix. The path energy of a simple graph $M$ is equal to the sum of the absolute values of the path eigenvalues of the graph $M$ (Shikare et. al, 2018). We have discovered new upper constraints on path energy in this study, expressed in terms of a graph's maximum degree. Additionally, a relationship between a graph's energy and path energy is given.}
		\end{abstract}
		\hrulefill
		
		\section{Introduction}
	The energy of a graph was introduce in 1978 by I. Gutman \cite{ge1}. If $M$ be a simple graph then $P(M)$ be its path matrix defined by Patekar and Shikare in 2016 as follows,
	\begin{definition}\cite{pe1}
		Let $M$ be a simple graph, and let $V (M)=v_1, v_2, \ldots, v_k.$ be its vertex set. Define the path matrix of graph $M$ as matrix $P = (p_{ij})$ of dimension $k \times k$, where $p_{ij} = 0$ if $i = j$ and equals the maximum number of vertex disjoint paths from $v_i$ to $v_j$ if $i \neq j.$
	\end{definition}
	The spectrum of path matrix is denoted by $Spec_P(M)$.
	Consider the following example,
	\begin{figure}[ht]\label{example1}\centering
		\begin{tikzpicture}
			\node [shape= circle, draw, label=below:4, fill=black] (4) at (-3, 0) {};
			\node [shape= circle, draw, label=below:3, fill=black] (3) at (0, 0) {};
			\node [shape= circle, draw, label=2, fill=black] (2) at (0, 1) {};
			\node [shape= circle, draw, label=1, fill=black] (1) at (-3, 1) {};
			\draw (1.center) to node[above] {p} (2.center);
			\draw (2.center) to node[right] {q}(3.center);
			\draw (3.center) to node[below] {r} (4.center);
			\draw (4.center) to node[below] {s} (2.center);
		\end{tikzpicture}
		\caption{\bf{M}}
	\end{figure}
	
	The path matrix for a graph $M$ is,
	\begin{equation*}
		P(M) =
		\begin{bmatrix}
			0 & 1 & 1 & 1 \\
			1 & 0 & 2 & 2 \\
			1 & 2 & 0 & 2 \\
			1 & 2 & 2 & 0 \\
		\end{bmatrix}
	\end{equation*}
	$Spec_P(M)=\{-2, -2, 4.6457, -0.6457\}.$
	\begin{definition}\cite{cn}
		The path energy of graph $M$ is represented by $PE = PE(M)$, which is equal to sum of  absolute values of the path eigenvalues. 
	\end{definition}	
	The path energy $PE(M)$ of the graph $M$ shown in Figure $1$ is, $\mid-2\mid+\mid-2\mid+\mid4.6457\mid+\mid-0.6457\mid=9.2914.$
	
	The path eigenvalues of the $r$-regular graph $M$ on $k$ vertices are $–r$ with multiplicity $k-1$ and $r(k-1)$ with multiplicity $1$, according to the authors' findings in  \cite{cn}. When this fact is applied to a $r$-regular graphs, the following outcomes occur.  
	\begin{theorem}\label{pek}
		Path energy of a $r$-regular graph on $k$ vertices is $2r(k-1).$     
	\end{theorem}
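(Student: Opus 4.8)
The plan is to read off the path energy directly from the spectral data recorded immediately before the statement. By the second definition above, $PE(M)$ is the sum of the absolute values of the path eigenvalues, and the excerpt already records that an $r$-regular graph on $k$ vertices has path spectrum consisting of $-r$ with multiplicity $k-1$ together with $r(k-1)$ with multiplicity one. Granting that spectral fact, the proof reduces to a single summation, so I would present the computation in one line rather than developing any new machinery.

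Concretely, I would write
\[
PE(M) \;=\; \sum_{i=1}^{k} |\lambda_i| \;=\; (k-1)\,|-r| \;+\; |r(k-1)| \;=\; (k-1)r + (k-1)r \;=\; 2r(k-1),
\]
using $r\ge 0$ and $k\ge 2$ so that the absolute values simply strip the signs. No inequality or estimate is needed; the identity is exact.

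The only place where genuine content hides is the spectral statement itself, which here is imported as known. If one instead wished to prove the theorem from first principles, the main step would be to show that the path matrix of an $r$-regular graph takes the form $P = r(J - I)$, where $J$ is the $k \times k$ all-ones matrix and $I$ is the identity; its eigenvalues are then $r(k-1)$ once (with the all-ones vector as eigenvector) and $-r$ with multiplicity $k-1$ (on the orthogonal complement). The hard part will be justifying $P = r(J-I)$, i.e. that the maximum number of vertex-disjoint paths between every pair of distinct vertices equals exactly $r$: the degree at each endpoint gives the upper bound $r$ immediately, but the matching lower bound requires the local connectivity to attain $r$ for all pairs, which is the claim I would scrutinize most carefully before relying on it.
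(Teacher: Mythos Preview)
Your proposal is correct and matches the paper's approach exactly: the paper states the spectral fact from \cite{cn} immediately before the theorem and treats the theorem as an instant consequence, without even writing out the one-line summation you supply. Your additional remarks about deriving $P = r(J-I)$ from first principles and the need to justify the lower bound on local connectivity go beyond what the paper does, but they are apt caveats rather than part of the intended proof.
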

	Using Theorem \ref{pek} and the fact that $L(K_p)$ is  $2(p-2)$-regular on $p(p-1)/2$ vertices and $L(K_{p,q})$ is $(p+q-2)$-regular on $pq$ vertices respectively. We get following corollaries.  
	\begin{corollary}\label{pel}
		Path energy of $L(K_p)$ is $2(p+1)(p-2)^2$.
	\end{corollary}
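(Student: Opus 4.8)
The plan is to apply Theorem \ref{pek} directly, since the corollary is an immediate specialization of the regular-graph formula. The graph $L(K_p)$ is asserted (in the sentence preceding the statement) to be $2(p-2)$-regular on $p(p-1)/2$ vertices, so I would set $r = 2(p-2)$ and $k = p(p-1)/2$ and substitute these into the expression $2r(k-1)$ furnished by Theorem \ref{pek}. This reduces the problem entirely to an algebraic simplification.

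The key step is to rewrite $k-1$ in factored form. I would compute
\begin{equation*}
    k - 1 = \frac{p(p-1)}{2} - 1 = \frac{p^2 - p - 2}{2} = \frac{(p-2)(p+1)}{2},
\end{equation*}
and then multiply:
\begin{equation*}
    2r(k-1) = 2 \cdot 2(p-2) \cdot \frac{(p-2)(p+1)}{2} = 2(p+1)(p-2)^2,
\end{equation*}
which is exactly the claimed value. The only nontrivial observation in the whole argument is the factorization $p^2 - p - 2 = (p-2)(p+1)$, which makes the numerator of $k-1$ cancel cleanly against the leading factor.

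Since the result follows from substitution, there is no real obstacle here; the corollary is essentially a sanity check of the $r$-regular formula against a familiar family of graphs. The only point worth double-checking is the stated regularity and vertex count of $L(K_p)$, namely that each edge of $K_p$ meets $2(p-2)$ other edges and that $K_p$ has $\binom{p}{2} = p(p-1)/2$ edges; once these are granted, the computation above completes the proof.
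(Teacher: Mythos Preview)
Your approach is essentially identical to the paper's: both establish that $L(K_p)$ is $2(p-2)$-regular on $p(p-1)/2$ vertices and then invoke Theorem~\ref{pek}. In fact your write-up is more complete, since you carry out the factorization $p^2-p-2=(p-2)(p+1)$ explicitly, whereas the paper simply asserts the conclusion after citing the theorem.
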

	\begin{proof}
		$K_p$ is a complete graph on $p$ vertices with $p(p-1)/2$ edges. Each edge of $K_p$ is adjacent to $2(n-2)$ distinct edges. Thus, in a line graph of $K_p$ each vertex has degree $2(p-2)$, so $L(K_p)$ is $2(p-2)$-regular on $p(p-1)/2$ vertices. Using the theorem \ref{pek} we conclude the proof.
	\end{proof}
	\begin{corollary}
		Path energy of $L(K_{p,q})$ is $2(p+q-2)(pq-1),$ for $p \geq 1$ and $q \geq 1.$
	\end{corollary}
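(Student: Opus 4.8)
The plan is to recognize this as a direct application of Theorem \ref{pek}, in exactly the same spirit as the proof of Corollary \ref{pel}. Since Theorem \ref{pek} evaluates the path energy of any $r$-regular graph on $k$ vertices as $2r(k-1)$, the whole problem collapses to identifying two structural parameters of $L(K_{p,q})$: its number of vertices and its common degree. Once these are pinned down, the result is obtained by substitution.

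First I would count the vertices of $L(K_{p,q})$. By the definition of a line graph, its vertices are in bijection with the edges of $K_{p,q}$. As $K_{p,q}$ is the complete bipartite graph with parts of sizes $p$ and $q$, it has exactly $pq$ edges, and therefore $L(K_{p,q})$ has $k = pq$ vertices.

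Next I would verify regularity and compute the degree. Fix an edge $e = uv$ of $K_{p,q}$ with $u$ in the part of size $p$ and $v$ in the part of size $q$. In the line graph, $e$ is adjacent to precisely those edges that share an endpoint with it. The edges sharing $u$ are the $q-1$ remaining edges from $u$ to the opposite part, and the edges sharing $v$ are the $p-1$ remaining edges incident to $v$; these two families are disjoint, since in a bipartite graph no edge other than $e$ can meet both $u$ and $v$. Hence every vertex of $L(K_{p,q})$ has degree $(q-1)+(p-1) = p+q-2$, so $L(K_{p,q})$ is $(p+q-2)$-regular. This degree count is the only step that calls for any care, and so is the main (mild) obstacle.

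Finally I would substitute $r = p+q-2$ and $k = pq$ into the formula of Theorem \ref{pek}, yielding $2(p+q-2)(pq-1)$, which is the asserted value and remains valid in the boundary cases $p = 1$ or $q = 1$.
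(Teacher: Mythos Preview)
Your argument is correct and follows the same route as the paper: apply Theorem \ref{pek} once $L(K_{p,q})$ is known to be $(p+q-2)$-regular on $pq$ vertices. The only difference is cosmetic---the paper cites Theorem 2.2 of \cite{kbg} for that structural fact, whereas you supply the short direct count of vertices and degree yourself.
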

	Proof follows from Theorem \ref{pek} stated above and Theorem 2.2 from \cite{kbg}.
	\begin{corollary}
		Prism graph $Y_p$ also called as circular ladder graph on $2p$ vertices, have path energy $6(2p-1)$ and antiprism graph $Y^{'}_{p}$ have path energy $8(2p-1)$, Moreover $PE(Y_p)=\dfrac{3}{4} PE(Y^{'}_{p})$
	\end{corollary}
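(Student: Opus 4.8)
The plan is to reduce both path-energy computations to Theorem~\ref{pek} by identifying each graph's regularity parameter $r$ and its number of vertices $k$, exactly as was done for $L(K_p)$ in Corollary~\ref{pel}. The only genuine content is therefore a structural description of the prism and antiprism graphs together with a degree count; the eigenvalue and energy formulas then follow automatically.

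First I would fix the combinatorial model of the prism $Y_p$ as the circular ladder, i.e.\ the Cartesian product $C_p \times K_2$: take two disjoint copies of the cycle $C_p$, an outer cycle $u_1 u_2 \cdots u_p$ and an inner cycle $w_1 w_2 \cdots w_p$, and join $u_i$ to $w_i$ by a rung for each $i$. This yields $|V(Y_p)| = 2p$. Each $u_i$ is adjacent to its two cycle-neighbours $u_{i-1}, u_{i+1}$ (indices taken mod $p$) and to the single rung-vertex $w_i$, so $\deg(u_i) = 3$, and by symmetry $\deg(w_i)=3$ as well. Thus $Y_p$ is $3$-regular on $2p$ vertices, and Theorem~\ref{pek} with $r=3$, $k=2p$ gives $PE(Y_p) = 2\cdot 3\cdot(2p-1) = 6(2p-1)$.

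Next I would treat the antiprism $Y^{'}_{p}$ in the same spirit. Again take two $p$-cycles $u_1\cdots u_p$ and $w_1 \cdots w_p$, but now join each $u_i$ to the two inner vertices $w_i$ and $w_{i+1}$ (indices mod $p$). This keeps $|V(Y^{'}_{p})| = 2p$ while raising every degree to $4$: each $u_i$ has the two cycle-neighbours $u_{i-1},u_{i+1}$ plus the two cross-edges to $w_i,w_{i+1}$, and symmetrically for the $w_i$. Hence $Y^{'}_{p}$ is $4$-regular on $2p$ vertices, and Theorem~\ref{pek} with $r=4$, $k=2p$ yields $PE(Y^{'}_{p})=2\cdot 4\cdot(2p-1)=8(2p-1)$. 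The final claim is then immediate, since $PE(Y_p)/PE(Y^{'}_{p}) = 6(2p-1)/\bigl(8(2p-1)\bigr) = 3/4$, so $PE(Y_p)=\tfrac{3}{4}\,PE(Y^{'}_{p})$.

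The one point that deserves care, and which I expect to be the only real obstacle, is the implicit hypothesis behind Theorem~\ref{pek}: its eigenvalue formula presupposes that the path matrix of the regular graph equals $r(J-I)$, that is, every pair of distinct vertices is joined by exactly $r$ vertex-disjoint paths. For a general regular graph this can fail (a disconnected regular graph is the obvious counterexample), so to be rigorous I would observe that both $Y_p$ and $Y^{'}_{p}$ are vertex-transitive and, being $3$- and $4$-connected respectively, attain the maximum number of vertex-disjoint paths permitted by the degree bound via Menger's theorem. This guarantees that the path matrix has the required constant off-diagonal form and legitimises the direct application of Theorem~\ref{pek}.
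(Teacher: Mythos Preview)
Your proposal is correct and follows essentially the same route as the paper: identify $Y_p$ as $3$-regular and $Y'_p$ as $4$-regular on $2p$ vertices, then plug into Theorem~\ref{pek}. The paper's proof is in fact terser than yours---it simply asserts the regularity and vertex/edge counts without constructing the graphs---and omits the connectivity check you raise in your final paragraph; that caveat about needing the path matrix to equal $r(J-I)$ is a genuine point the paper does not address.
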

	\begin{proof}
		Prism graph $Y_p$ has $2p$ vertices and $3p$ edges and it is $3$-regular graph. $Y^{'}_{p}$ is $4-$regular, consisting of $2p$ vertices and $4p$ edges. Using the theorem \ref{pek} we get $PE(Y_p)=6(2p-1)$ and $PE(Y^{'}_{p})=8(2p-1).$
	\end{proof}
	
	In 2016, Patekar and Shikare \cite{pe1} have found path eigenvalues of some graphs. In Table \ref{tab:Table 1} we have observed that all graphs in Table \ref{tab:Table 1} has only one positive eigenvalue which is path spectral radius ($\rho$) of that graph. Thus for such graphs we can easily obtained path energy which is equal to $PE(M)=2\rho$
	\begin{proposition}
		Let $M$ be a simple connected graph with only one positive path eigenvalue then this path eigenvalue is path spectral radius of $M$ called $\rho.$ Moreover $PE(M)=2\rho.$  
	\end{proposition}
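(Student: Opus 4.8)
The plan is to exploit two structural features of the path matrix $P=P(M)$: it is a real symmetric matrix whose diagonal is entirely zero, and for a connected graph every off-diagonal entry is strictly positive. The zero diagonal will force the eigenvalues to sum to zero, while the positivity will let me invoke Perron--Frobenius theory to identify the single positive eigenvalue with the spectral radius. These two facts together make the whole statement fall out almost immediately.

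First I would record the elementary properties. Since $P$ is real symmetric, the spectrum $Spec_P(M)=\{\lambda_1,\ldots,\lambda_k\}$ consists of real numbers, and since $p_{ii}=0$ for all $i$ we have $\sum_{i=1}^{k}\lambda_i=\operatorname{tr}(P)=0$. Next, because $M$ is connected there is at least one path between any two distinct vertices, so $p_{ij}\ge 1$ whenever $i\ne j$; hence $P$ is a nonnegative irreducible matrix (for $k\ge 2$ it is, off the diagonal, strictly positive). I would then apply the Perron--Frobenius theorem: the spectral radius $\rho=\rho(P)$ is itself an eigenvalue of $P$ and is strictly positive. As $M$ is assumed to possess exactly one positive path eigenvalue and $\rho>0$ is a positive eigenvalue, this forces $\rho$ to be precisely that unique positive eigenvalue, establishing the first assertion.

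For the energy identity I would split the sum of absolute values according to sign. Writing $\rho$ for the single positive eigenvalue and using that every remaining eigenvalue is $\le 0$,
\[
PE(M)=\sum_{i=1}^{k}|\lambda_i|=\rho+\sum_{\lambda_i\le 0}(-\lambda_i)=\rho-\sum_{\lambda_i\le 0}\lambda_i.
\]
Since the total sum of eigenvalues vanishes, $\sum_{\lambda_i\le 0}\lambda_i=-\rho$, and substituting yields $PE(M)=\rho-(-\rho)=2\rho$. (As a sanity check, the graph in Figure~1 has a single positive eigenvalue $4.6457$, and indeed $PE(M)=9.2914=2\times 4.6457$.)

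The argument is short, and the only point genuinely requiring care is the claim that the unique positive eigenvalue is the spectral radius rather than merely \emph{some} positive eigenvalue. This is exactly what Perron--Frobenius delivers, but only after one verifies that $P$ is irreducible; connectedness of $M$ is what guarantees this, so I would make that verification explicit. Beyond this identification, every remaining step is a routine consequence of the vanishing trace, and I anticipate no further obstacle.
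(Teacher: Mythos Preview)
Your proof is correct, but it takes a different route from the paper. The paper does not invoke Perron--Frobenius at all; instead it argues purely from the vanishing trace: writing $\beta_1>0$ and $\beta_i\le 0$ for $i\ge 2$, the relation $\beta_1=-\sum_{i\ge 2}\beta_i=\sum_{i\ge 2}|\beta_i|$ immediately gives $|\beta_1|\ge |\beta_k|$ for every $k$, so $\beta_1$ is the spectral radius by a one-line contradiction, and $PE(M)=|\beta_1|+\sum_{i\ge 2}|\beta_i|=2\beta_1$ follows. Your approach instead uses connectedness to verify that $P$ is irreducible and then lets Perron--Frobenius hand you a positive eigenvalue equal to $\rho$, after which uniqueness of the positive eigenvalue does the rest. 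The paper's argument is more elementary and self-contained (indeed it never uses connectedness), while yours imports a stronger structural theorem but in exchange explains \emph{why} the spectral radius is positive in the first place, a fact the paper simply does not address.
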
     
	\begin{proof}
		Let $P(M)$ be a path matrix of simple connected graph $M$ on $p$ vertices with path eigenvalues $\beta_1,\beta_2, \ldots, \beta_n.$ Without loss of generality assume that $\beta_1 > 0$ and for $i\geq2,$ $\beta_i < 0.$ As trace of the path matrix is always zero this implies that $\beta_1+ \beta_2+\beta_3 +\ldots+\beta_n=0$ gives us, $\beta_1= -\beta_2-\beta_3 -\ldots-\beta_n.$ On the contrary assume that $\beta_1$ is not a path spectral radius of $M$ then for some integer $k$ such that $2 \leq k\leq p,$ $|\beta_k|$ be a path spectral radius of $M$ that is $|\beta_k|>|\beta_1|$. Consider, $\beta_1= -\beta_2-\beta_3 -\ldots-\beta_n$ implies that $|\beta_1|= |\beta_2|+|\beta_3| +\ldots+|\beta_n|.$ This contradict the fact that $|\beta_k|>|\beta_1|$. Thus $\beta_1$ is path spectral radius of $M.$ Moreover, $PE(M)=|\beta_1|+ |\beta_2|+|\beta_3| +\ldots+|\beta_n|$ as $\beta_1= \beta_2+\beta_3 +\ldots+\beta_n.$ Gives us $PE(M)=2\beta_1=2\rho.$      
	\end{proof}
	\begin{center}
		Following is the list of some graphs having only one positive path eigenvalue ($\rho$) and path energy $2\rho$. 
		\begin{table}[ht]
			\renewcommand{\arraystretch}{1}
			\begin{tabular}{|c | c |} 
				\hline
				Graphs & Path eigenvalues \\ 
				\hline
				$K_p$ & $ \begin{pmatrix}
					(p-1)^2 & (-p+1) \\
					1 & (p-1) \\
				\end{pmatrix}$ \\ 
				\hline
				($r$-regular graph)$_p$ & $\begin{pmatrix}
					r(p-1) & (-r) \\
					1 & (p-1) \\
				\end{pmatrix}$ \\
				\hline
				$T_p$ &$\begin{pmatrix}
					(p-1) & (-1) \\
					1 & (p-1) \\
				\end{pmatrix}$ \\
				\hline
				$K_{p,q}$ & $\begin{pmatrix}
					\frac{p(q-1)+ q(p-1)\pm\sqrt{(p-q)^2+4p^3q}}{2} & (-q) & (-p)\\
					1 & (p-1) & (q-1)\\
				\end{pmatrix}$ \\
				\hline
				$\mathcal{Q}_{p}$ & $\begin{pmatrix}
					p(2^p-1) & (-1) \\
					1 & (2^p-1) \\
				\end{pmatrix}$ \\  
				\hline
				$\mathcal{Q}_{p}\times \mathcal{Q}_{q}$ & $\begin{pmatrix}
					(q+p)(2^{q+p}-1) & -(q+p) \\
					1 & 2^{q+p}-1 \\
				\end{pmatrix}$ \\
				\hline
				$W_p$ & $\begin{pmatrix}
					3(p-1) & -3 \\
					1 & (p-1) \\
				\end{pmatrix}$ \\
				\hline
			\end{tabular}
			\caption{\label{tab:Table 1}Graphs having only one positive path eigenvalue}
		\end{table}
	\end{center}        
	\begin{conjecture}\label{conjecture1}
		If $M$ is $2$-connected graph then its path matrix has exactly one positive eigenvalue.  
	\end{conjecture}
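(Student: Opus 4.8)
The plan is to translate the statement into an inertia computation for the symmetric matrix $P=P(M)$ and to isolate the positive part of its spectrum using the Perron eigenvector. Since $M$ is $2$-connected it is in particular connected, so $P$ is an irreducible symmetric nonnegative matrix with zero diagonal; by Menger's theorem each off-diagonal entry $p_{ij}$ equals the local connectivity $\kappa(v_i,v_j)$, and $2$-connectivity forces $p_{ij}\ge 2$ for all $i\ne j$. By the Perron--Frobenius theorem the spectral radius $\rho$ is a \emph{simple} eigenvalue of $P$ with a strictly positive eigenvector $u$, so $P$ has at least one positive eigenvalue. Writing $n_+(P)$ for the number of positive eigenvalues, the whole statement reduces to proving $n_+(P)\le 1$.

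First I would record the exact reformulation that makes $n_+(P)=1$ checkable. Because $u$ is an eigenvector, both $\mathrm{span}(u)$ and the hyperplane $u^{\perp}$ are $P$-invariant, and on $\mathrm{span}(u)$ the form equals $\rho\,\|u\|^2>0$. Hence
\[
n_+(P)=1+n_+\!\left(P|_{u^{\perp}}\right),
\]
so the conjecture is \emph{equivalent} to the assertion that the quadratic form $x\mapsto x^{\top}Px$ is negative semidefinite on the hyperplane $u^{\perp}$. Thus the target inequality to be established is $x^{\top}Px\le 0$ for every $x\perp u$.

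To attack this inequality I would start from the decomposition $P=2(J-I)+E$, where $J$ is the all-ones matrix and $E=(p_{ij}-2)$ is a symmetric nonnegative ``excess connectivity'' matrix with zero diagonal. For the regular-type graphs in Table~\ref{tab:Table 1} one has $u=\mathbf{1}$ and $E$ a scalar multiple of $J-I$, and then $x^{\top}Px=-2\|x\|^2+x^{\top}Ex\le 0$ on $\mathbf 1^{\perp}$ is immediate; this is precisely why those graphs have exactly one positive eigenvalue. The genuine difficulty is twofold: for a general $2$-connected graph the Perron vector $u$ is \emph{not} $\mathbf 1$, so the relevant hyperplane is not the zero-sum hyperplane, and the excess form $x^{\top}Ex$ can be strictly positive on it. The heart of the proof must therefore be a bound showing that the positive contribution of $E$ is always dominated by the $-2\|x\|^2$ coming from the base term, measured against the correct hyperplane $u^{\perp}$.

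The most promising route to such a bound is an ear-decomposition induction: every $2$-connected graph is built from a cycle by successively adjoining open ears, and $P(C_n)=2(J-I)$ already has a single positive eigenvalue. One would track how the inertia of $P$ evolves as each ear is added, ideally expressing the change as a sign-controlled, low-rank perturbation so that Weyl/Cauchy interlacing keeps $n_+$ pinned at $1$. I expect this to be the main obstacle: adjoining an ear can raise $\kappa(v_i,v_j)$ for many \emph{non-incident} pairs simultaneously, so the update to $P$ is neither local nor obviously low rank, and the Perron vector $u$ shifts as well. Controlling this non-local, coupled change --- equivalently, proving the hyperplane inequality $x^{\top}Px\le 0$ on $u^{\perp}$ directly from a Menger flow certificate --- is where the real work lies; verifying the inequality on all small $2$-connected graphs first would be a sensible way to test the claimed domination bound before committing to the general argument.
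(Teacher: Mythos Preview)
The statement you are addressing is labelled \emph{Conjecture} in the paper, and the paper supplies no proof of it; it only records the observation, gives supporting examples in Table~\ref{tab:Table 1}, notes that the converse fails, and moves on. So there is no ``paper's proof'' against which to compare your attempt.

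As for the attempt itself: what you have written is a research plan, not a proof. The reformulation is correct --- Perron--Frobenius gives a simple positive eigenvalue with positive eigenvector $u$, and since $u^{\perp}$ is $P$-invariant, $n_{+}(P)=1$ is equivalent to $x^{\top}Px\le 0$ for all $x\perp u$. The decomposition $P=2(J-I)+E$ with $E\ge 0$ is also a legitimate starting point, and you correctly identify why the regular cases in the table are trivial. But the two obstacles you name are exactly the content of the conjecture: for a generic $2$-connected graph the Perron vector is not $\mathbf 1$, so the relevant hyperplane moves, and an ear addition changes many $\kappa(v_i,v_j)$ at once, so the perturbation to $P$ is neither local nor of bounded rank. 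You state that ``controlling this non-local, coupled change \dots\ is where the real work lies'' and stop there. That is an honest assessment, but it means no step of the actual argument has been carried out; in particular there is no lemma, inequality, or interlacing bound in your text that could be checked. Until the hyperplane inequality (or the ear-induction step) is established for at least one nontrivial infinite family beyond the regular case, the proposal remains a strategy rather than a proof of an open conjecture.
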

	Note that: The converse of the above Conjecture \ref{conjecture1} is not true that is, If path matrix has exactly one positive eigenvalue then its corresponding graph may not be $2$-connected.  
	\begin{example}
		The eigenvalues of any tree on $p$ vertices have path spectrum $\{(p-1),-1, -1,$\ldots$,-1_{(p-1) \text{times}}.\}$
	\end{example}
	\begin{example}
		In \cite{Aleksandar Ilic}, Ilic proved that path matrix of unicyclic graph on $p$ verices has two positive path eigenvalues if $p\geq 7$ and $3\leq p \leq (p-3)$. 
	\end{example}
	
	\begin{conjecture}\label{Conjecture2}
		If the graph $M$ can be decomposed in to $k$  $2$-connected components then path matrix of $M$ has exactly $k$ positive eigenvalues.
	\end{conjecture}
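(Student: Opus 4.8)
The plan is to convert the block structure of $M$ into the entry pattern of $P(M)$ via Menger's theorem and then induct on the number $k$ of $2$-connected components. The structural foundation is the following observation. By Menger's theorem, $p_{uv}$ equals the minimum size of a $u$--$v$ vertex separator. If $u$ and $v$ lie in distinct blocks, any cut vertex on the block--cut tree path joining them is a single-vertex separator, so $p_{uv}=1$; if $u$ and $v$ lie in a common block $B_i$, then a path leaving $B_i$ through a cut vertex can only re-enter through that same cut vertex, so no external detour helps and $p_{uv}=p^{B_i}_{uv}\ge 2$. Consequently, after ordering the vertices block by block, $P(M)$ carries the within-block path matrices $P(B_i)$ (overlapping only in the rows/columns of shared cut vertices) and all-ones coupling blocks everywhere else.

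I would run the induction on $k$, with base case $k=1$ being exactly Conjecture \ref{conjecture1}. For the inductive step, choose a leaf block $B$ of the block--cut tree, attached to the remainder $M'$ (which has $k-1$ blocks) at a single cut vertex $c$. Writing $X=V(M')\setminus\{c\}$ with $|X|=n'$ and $Y=V(B)\setminus\{c\}$ with $|Y|=m$, the structural observation gives
\[
P(M)=\begin{pmatrix} P_{XX} & a & \mathbf 1_{n'}\mathbf 1_m^{T}\\ a^{T} & 0 & q^{T}\\ \mathbf 1_m\mathbf 1_{n'}^{T} & q & P_{YY}\end{pmatrix},
\]
where $\begin{pmatrix}0 & q^{T}\\ q & P_{YY}\end{pmatrix}=P(B)$, $\begin{pmatrix}P_{XX}&a\\ a^{T}&0\end{pmatrix}=P(M')$, and the all-ones block $\mathbf 1_{n'}\mathbf 1_m^{T}$ is forced by $p_{xy}=1$ for $x\in X$, $y\in Y$.

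The engine is Sylvester's law of inertia together with Haynsworth inertia additivity; let $n_+(\cdot)$ denote the number of positive eigenvalues. By Conjecture \ref{conjecture1}, $P(B)$ has inertia $(1,|Y|,0)$. Pivoting on the leaf block $P(B)$ yields $\mathrm{In}(P(M))=\mathrm{In}(P(B))+\mathrm{In}(S)$, and since the coupling from $X$ into $\{c\}\cup Y$ lies in $\mathrm{span}\{a,\mathbf 1_{n'}\}$, the Schur complement collapses to a low-rank modification
\[
S=P_{XX}-\alpha\, aa^{T}-\beta\,(a\mathbf 1_{n'}^{T}+\mathbf 1_{n'}a^{T})-\gamma\,\mathbf 1_{n'}\mathbf 1_{n'}^{T},
\]
where $\alpha=e_c^{T}P(B)^{-1}e_c$, $\beta=\mathbf 1_Y^{T}P(B)^{-1}e_c$, $\gamma=\mathbf 1_Y^{T}P(B)^{-1}\mathbf 1_Y$. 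It then remains to prove $n_+(S)=k-1$. I would do this by comparing $S$ with $P(M')$: both are bounded-rank modifications of $P_{XX}$ (the bordered matrix $P(M')$ differs from $P_{XX}$ through the scalar Schur complement $-a^{T}P_{XX}^{-1}a$, while $S$ differs by the rank-$\le 2$ term above), and the induction hypothesis supplies $n_+(P(M'))=k-1$; matching these two modifications delivers $n_+(P(M))=1+(k-1)=k$.

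The main obstacle is precisely this final inertia bookkeeping: showing that subtracting the rank-$\le 2$ form from $P_{XX}$ shifts the positive-eigenvalue count by exactly the amount produced by the bordering that yields $P(M')$. This hinges on the signs of $\alpha,\beta,\gamma$, i.e.\ on the sign pattern of the quadratic forms built from $P(B)^{-1}$, and here I expect to need the Perron--Frobenius structure of $P(B)$ (nonnegative, irreducible, with a unique positive eigenvalue by Conjecture \ref{conjecture1}) to pin those signs down, since $P(B)^{-1}$ inherits the inertia $(1,|Y|,0)$ and its dominant eigendirection is entrywise positive. A secondary obstacle is the degenerate case in which $P(B)$, $P_{XX}$, or $P(M')$ is singular, invalidating the pivoting steps; I would dispose of this either by a continuity/perturbation argument on the eigenvalues or by replacing Haynsworth additivity with an explicit symmetric congruence that eliminates the $X$--$Y$ coupling without forming any inverse.
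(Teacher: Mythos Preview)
The statement you are trying to prove is presented in the paper as a \emph{conjecture}, with no accompanying argument; the authors simply state it after the examples that follow Conjecture~\ref{conjecture1}. There is therefore no proof in the paper for your proposal to be compared against.

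Regarding the proposal itself, two genuine gaps remain. First, your base case $k=1$ is exactly Conjecture~\ref{conjecture1}, which is equally unproved in the paper; even a flawless inductive step would thus establish only the implication ``Conjecture~\ref{conjecture1} $\Rightarrow$ Conjecture~\ref{Conjecture2}'', not Conjecture~\ref{Conjecture2} outright. Second, the inductive step is not actually completed. After pivoting on $P(B)$ you need $n_{+}(S)=k-1$, and you correctly flag this as the crux, but the clause ``matching these two modifications delivers $n_{+}(P(M))=1+(k-1)=k$'' is an assertion, not an argument. The Schur complement $S$ is an $n'\times n'$ matrix on $X$, whereas $P(M')$ is an $(n'+1)\times(n'+1)$ matrix on $X\cup\{c\}$, so they are not directly comparable; moreover the rank-$\le 2$ correction $\alpha\,aa^{T}+\beta\,(a\mathbf 1^{T}+\mathbf 1 a^{T})+\gamma\,\mathbf 1\mathbf 1^{T}$ can move up to two eigenvalues of $P_{XX}$ across zero in either direction, and without determining the signature of the $2\times 2$ form $\bigl(\begin{smallmatrix}\alpha&\beta\\ \beta&\gamma\end{smallmatrix}\bigr)$ you cannot conclude that the net shift matches the one produced by the rank-one bordering that yields $P(M')$. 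Your Perron--Frobenius remark about $P(B)^{-1}$ points in a plausible direction but is not carried out, and the singular cases are deferred to a perturbation argument that is only named, not supplied.
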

	\section {New bounds for path energy}
	In the following proposition we have found upper bound for row sum of path matrix in term of degree of vertices.  
	\begin{proposition}\label{bounds}
		$M$ be a graph on $p$ vertices then,
		$$\sum_{i\neq j=1}^{p} p_{i,j} \leq (p-1)d(v_i),$$ for $i\neq j=1,2,\dots,p.$
	\end{proposition}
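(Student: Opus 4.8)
The plan is to bound each entry in the $i$-th row of the path matrix by $d(v_i)$ and then sum over the $p-1$ off-diagonal positions. The whole statement reduces to a single pointwise inequality.

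First I would establish the pointwise bound $p_{i,j} \le d(v_i)$ for every $j \neq i$. By definition $p_{i,j}$ is the maximum number of internally vertex-disjoint paths from $v_i$ to $v_j$. Fix such a maximal family of paths. Each path must leave $v_i$ along an edge incident to $v_i$, arriving first at some neighbour of $v_i$; since the paths are internally vertex-disjoint, these first neighbours are pairwise distinct. Hence the number of paths in the family cannot exceed the number of neighbours of $v_i$, which is exactly $d(v_i)$. This yields $p_{i,j} \le d(v_i)$.

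Next, since $j$ ranges over the $p-1$ indices different from $i$, I would sum the inequality over all such $j$:
$$\sum_{\substack{j=1 \\ j \neq i}}^{p} p_{i,j} \;\le\; \sum_{\substack{j=1 \\ j \neq i}}^{p} d(v_i) \;=\; (p-1)\,d(v_i),$$
which is precisely the claimed bound.

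The argument is short, and the only genuine content is the pointwise inequality $p_{i,j}\le d(v_i)$. The step most worth stating carefully — and the closest thing to an obstacle — is justifying why the first edges of the disjoint paths land on distinct neighbours of $v_i$. This is the elementary pigeonhole observation that internally vertex-disjoint paths sharing the endpoint $v_i$ cannot reuse a neighbour, and it is essentially the trivial direction of Menger's theorem specialised to a single vertex; once it is in hand, the summation is routine.
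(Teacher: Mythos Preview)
Your proof is correct and follows exactly the same approach as the paper: establish the pointwise bound $p_{i,j}\le d(v_i)$ and sum over the $p-1$ off-diagonal entries. If anything, your justification of the pointwise inequality (via distinct first neighbours of internally vertex-disjoint paths) is more explicit than the paper's, which simply asserts that $p_{i,j}$ is at most the degree of $v_i$.
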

	
	\begin{proof}
		We fixed $i$, $p_{i,j}$ is at most degree of $v_i$ for all $i \neq j=1,\dots,p$ and for $i=j$ it is $0.$ That is,\\
		$$ p_{i,j} \leq d(v_i)$$
		taking summation over $j$, we get $$ \sum_{j \neq i=1}^{p} p_{i,j} \leq \sum_{j \neq i=1}^{p} d(v_i).$$ 
	\end{proof}
	Following theorem gives the upper bound for absolute eigenvalue of path matrix in terms of maximum vertex degree of a graph.     
	\begin{theorem} \label{mev}
		Let $M$ be a graph on $p$ vertices, with path matrix $P(M)$. $\Delta$ be the maximum degree of $M.$ If $\beta$ is an eigenvalue of $P(M)$ then $|\beta|\leq (p-1)\Delta$.  
	\end{theorem}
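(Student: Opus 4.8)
The plan is to bound every eigenvalue by a row sum of the path matrix, using the estimate already recorded in Proposition~\ref{bounds}, and then invoke a standard eigenvalue-localization argument. The key structural facts I would use are that $P(M)$ is a nonnegative, symmetric matrix with zero diagonal, and that each off-diagonal entry satisfies $p_{ij}\le d(v_i)$ (since vertex-disjoint paths leaving $v_i$ must use distinct edges at $v_i$). Consequently, for every fixed row $i$,
\[
\sum_{j\neq i} |p_{ij}| \;=\; \sum_{j\neq i} p_{ij} \;\le\; (p-1)\,d(v_i)\;\le\;(p-1)\Delta,
\]
where the middle inequality is exactly Proposition~\ref{bounds} and the last step replaces $d(v_i)$ by the maximum degree $\Delta$.

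The cleanest way to finish is Gershgorin's circle theorem. Every eigenvalue $\beta$ of $P(M)$ lies in some disc centered at a diagonal entry $p_{ii}$ with radius equal to the corresponding deleted row sum; since all diagonal entries are $0$, this gives $|\beta - 0| \le \sum_{j\neq i}|p_{ij}|$ for some index $i$. Combining this with the row-sum bound above yields $|\beta|\le (p-1)\Delta$, which is the claim.

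If one prefers to avoid quoting Gershgorin, the same conclusion follows from a direct eigenvector computation. Let $x=(x_1,\dots,x_p)^{\top}$ be an eigenvector for $\beta$, normalized so that $\max_i |x_i| = |x_k| = 1$. Reading off the $k$-th coordinate of $P(M)x=\beta x$ gives
\[
|\beta| \;=\; |\beta|\,|x_k| \;=\; \Bigl|\sum_{j\neq k} p_{kj}\,x_j\Bigr| \;\le\; \sum_{j\neq k} p_{kj}\,|x_j| \;\le\; \sum_{j\neq k} p_{kj} \;\le\; (p-1)\Delta,
\]
using $|x_j|\le 1$ for all $j$ and again the row-sum estimate.

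I do not expect a genuine obstacle here, since the result is essentially a row-sum/Gershgorin bound. The only point requiring a little care is the justification that $p_{ij}\le d(v_i)$ (equivalently $p_{ij}\le\Delta$), which is where the graph-theoretic meaning of the entries enters; once that is in hand, the summation over the $p-1$ off-diagonal entries and the choice of localization tool are routine. I would therefore state the entrywise bound explicitly and cite Proposition~\ref{bounds} for the row sum before applying the eigenvalue estimate.
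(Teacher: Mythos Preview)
Your argument is correct and matches the paper's approach exactly: the paper also takes an eigenvector $x$ with $|x_1|=\max_i|x_i|$, reads off the corresponding coordinate of $P(M)x=\beta x$, and bounds the resulting sum by the row-sum estimate $\sum_{j\neq i}p_{ij}\le (p-1)d(v_i)\le (p-1)\Delta$. Your Gershgorin phrasing is just a cleaner packaging of the same computation.
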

	\begin{proof}
		Let $x$ be an eigenvector of $P(M)$ corresponding to eigenvalue $\beta,$ then $Px=\beta x.$
		Write $x=(x_1, x_2,x_3,...,x_p)^t$ where without loss of generality, $|x_1|=\underset{ 1\leq i \leq p}\max|x_i|.$ 
		$|\beta||x_1|=\displaystyle\sum_{j \neq i=1}^{p} p_{1,j} x_j 
		\leq |x_1|\sum_{j \neq i=1}^{p} p_{1,j}$
		$\leq |x_1| (p-1) deg(v_i)	
		\leq |x_1| (p-1) \Delta
		\leq |x_1| (p-1) \Delta.$ 
	\end{proof}
	Aleksandar Ilic et al. \cite{Aleksandar Ilic}  proved the conjecture $PE(M)\leq 2(p-1)^2$ stated in Shikare et. al \cite{pe1}. In the following corollary we obtained a bound for path energy of graph in term of total number of edges.
	\begin{corollary}
		Let $M$ be graph on $p$ vertices and $m$ edges. Then $PE(G) \leq 2(p-1)m.$
	\end{corollary}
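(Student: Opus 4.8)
The plan is to bound the path energy by the sum of all entries of the path matrix, and then to finish with the row-sum estimate of Proposition \ref{bounds} combined with the handshake lemma. Denoting the path eigenvalues by $\beta_1,\dots,\beta_p$, we have $PE(M)=\sum_{i=1}^p|\beta_i|$, so it suffices to produce an upper bound for this sum of absolute eigenvalues in terms of the entries $p_{ij}$.

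The central step I would establish is the inequality
$$\sum_{i=1}^p |\beta_i| \le \sum_{i=1}^p\sum_{j=1}^p |p_{ij}|,$$
valid for any real symmetric matrix: the sum of the absolute values of the eigenvalues is dominated by the sum of the absolute values of the entries. A clean way to see this is to decompose $P=\sum_{i,j}p_{ij}\,e_ie_j^{\,t}$ into rank-one pieces; since each $e_ie_j^{\,t}$ has a single nonzero singular value equal to $1$, and since for a symmetric matrix the singular values are exactly the $|\beta_i|$, the triangle inequality for the trace norm yields $\sum_i|\beta_i|=\|P\|_*\le\sum_{i,j}|p_{ij}|\,\|e_ie_j^{\,t}\|_*=\sum_{i,j}|p_{ij}|$. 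I expect this to be the main obstacle, as it is the only genuinely spectral step. If one prefers to avoid trace-norm language, the same bound follows elementarily from the spectral theorem: diagonalizing $P=U\Lambda U^{\,t}$ and setting $D=U\,\mathrm{sign}(\Lambda)\,U^{\,t}$, one checks that $D$ is symmetric and orthogonal, so $|D_{ij}|\le 1$, while $\mathrm{tr}(DP)=\sum_i|\beta_i|$; hence $\sum_i|\beta_i|=\sum_{i,j}D_{ij}\,p_{ij}\le\sum_{i,j}|p_{ij}|$.

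With that inequality in hand the remainder is bookkeeping. Every entry $p_{ij}$ is a nonnegative integer, so $\sum_{i,j}|p_{ij}|=\sum_{i,j}p_{ij}=\sum_{i=1}^p\bigl(\sum_{j\neq i}p_{ij}\bigr)$ is just the total of the $p$ row sums. Proposition \ref{bounds} bounds the $i$th row sum by $(p-1)d(v_i)$, giving
$$\sum_{i,j}p_{ij}\le (p-1)\sum_{i=1}^p d(v_i).$$
Finally the handshake lemma supplies $\sum_{i=1}^p d(v_i)=2m$, whence $PE(M)\le (p-1)(2m)=2(p-1)m$, as claimed. As a consistency check, on a connected $r$-regular graph one has $PE=2r(p-1)$ and $2m=pr$, and indeed $2r(p-1)\le pr(p-1)$ for all $p\ge 2$, so the bound holds (loosely) in that family.
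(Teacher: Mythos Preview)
Your argument is correct and proceeds along a genuinely different line from the paper's. The paper appeals to Theorem~\ref{mev}, writing $|\beta|\le (p-1)\deg(v_i)$ and then ``taking summation over $i$'' to reach $\sum_{i=1}^{p}|\beta|\le (p-1)\sum_{i}\deg(v_i)=2(p-1)m$; that is, it bounds each eigenvalue individually via a Gershgorin-type estimate and then sums. You instead bypass any per-eigenvalue bound and control the whole trace norm $\sum_i|\beta_i|$ at once by the entrywise sum $\sum_{i,j}p_{ij}$, after which Proposition~\ref{bounds} and the handshake lemma finish the job. Your route is cleaner and more robust: the inequality $\sum_i|\beta_i|\le\sum_{i,j}|p_{ij}|$ is a self-contained spectral fact (your duality argument with $D=U\,\mathrm{sign}(\Lambda)\,U^{t}$ is fine, since $D$ orthogonal forces $|D_{ij}|\le 1$), whereas the paper's summation step implicitly treats $|\beta|\le(p-1)\deg(v_i)$ as holding for \emph{every} vertex $v_i$, which is stronger than what the proof of Theorem~\ref{mev} actually yields (there one only obtains the bound at the vertex where the eigenvector attains its maximum coordinate). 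Your approach therefore proves the corollary rigorously while sidestepping that ambiguity.
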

	
	\begin{proof}
		Theorem \ref{mev} gives us  
		\begin{equation}
			|\beta|\leq (p-1)deg(v_i)	 
		\end{equation}
		This is true for every $|\beta|,$ taking summation over $i$ we get,
		\begin{equation}
			\sum_{i=1}^{p} |\beta|\leq (p-1) \sum_{i=1}^{p} deg(v_i) \leq (p-1)(2m).
		\end{equation}
		Thus we get $PE(M) \leq 2(p-1)m.$ 
	\end{proof}
	
	\begin{corollary} \label{enq}	
		Let $M$ be a graph on $p$ vertices. $\Delta$ be the maximum vertex degree of $M$ then $PE(M) \leq p(p-1) \Delta.$
	\end{corollary}
	\begin{proof}
		We have $|\beta|\leq (p-1) \Delta$ for every $|\beta|.$ Taking summation over $\beta \in Spec_P,$ we get $PE(M) \leq p(p-1) \Delta.$    
	\end{proof}
	A relation between energy of graph and path energy of graph is given in the following corollary. 
	\begin{corollary}
		Let $M$ be a connected graph on $p$ vertices and $\Delta$ be the maximum vertex degree of a graph $M$ then $ E(M) \leq \dfrac{p}{2} PE(M) \leq \dfrac{p^2(p-1)\Delta}{2}$. Equality holds if $M=K_2.$  
	\end{corollary}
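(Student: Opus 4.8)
The plan is to split the claimed chain into its two inequalities and treat them separately. The right-hand inequality $\frac{p}{2}PE(M) \le \frac{p^2(p-1)\Delta}{2}$ is immediate: Corollary \ref{enq} already gives $PE(M) \le p(p-1)\Delta$, and multiplying both sides by $p/2$ yields exactly this bound. So essentially all of the work lies in the left-hand inequality $E(M) \le \frac{p}{2}PE(M)$, which is the part genuinely relating the ordinary graph energy to the path energy.

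First I would fix notation: let $\lambda_1 \ge \lambda_2 \ge \cdots \ge \lambda_p$ be the adjacency eigenvalues of $M$, so that $E(M)=\sum_{i=1}^p|\lambda_i|$, and let $\rho$ denote the path spectral radius. The key structural observation is that the path matrix dominates the adjacency matrix entrywise: whenever $v_i \sim v_j$ the single edge is already a path, so $p_{ij}\ge 1 = a_{ij}$, while for non-adjacent off-diagonal entries and for the diagonal one has $p_{ij}\ge 0 = a_{ij}$. Hence $0 \le A(M) \le P(M)$ entrywise, and both matrices are symmetric and nonnegative.

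The main step is then to chain together three scalar comparisons. By the monotonicity of the spectral radius for entrywise-ordered nonnegative matrices (Perron--Frobenius), the domination $A(M)\le P(M)$ gives $\lambda_1 = \rho(A)\le \rho(P)=\rho$. Since $A(M)$ is symmetric and nonnegative, its spectral radius bounds every eigenvalue in modulus, so $E(M)=\sum_{i=1}^p|\lambda_i|\le p\,\lambda_1$. Finally, because $\operatorname{tr}P=0$ the positive path eigenvalues sum to $PE(M)/2$, and that sum is at least its largest term $\rho$, giving $PE(M)\ge 2\rho$. Combining, $E(M)\le p\,\lambda_1 \le p\,\rho \le \frac{p}{2}PE(M)$, as required.

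I expect the only real content to sit in the passage $\lambda_1\le\rho$, which rests on the spectral-radius monotonicity of entrywise-ordered nonnegative matrices; the remaining inequalities are bookkeeping with the trace-zero condition. To close, I would check the equality case directly for $M=K_2$: there $A=P=\begin{pmatrix}0&1\\1&0\end{pmatrix}$, so $E(M)=PE(M)=2$, while $p=2$ and $\Delta=1$, and all three expressions $E(M)$, $\frac{p}{2}PE(M)$, and $\frac{p^2(p-1)\Delta}{2}$ equal $2$.
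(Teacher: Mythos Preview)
Your proof is correct, and for the left-hand inequality $E(M)\le \tfrac{p}{2}PE(M)$ it takes a genuinely different route from the paper. The paper never compares $A(M)$ and $P(M)$ spectrally: it bounds every adjacency eigenvalue by $\Delta$ to obtain $E(M)\le p\Delta\le p(p-1)$, then invokes the external lower bound $PE(M)\ge 2(p-1)$ of Ili\'c and Ba\v{s}i\'c and chains $E(M)\le p(p-1)\le p\cdot\tfrac{PE(M)}{2}$. Your argument instead exploits the entrywise domination $A(M)\le P(M)$ together with Perron--Frobenius monotonicity to get $\lambda_1\le\rho$, and then the trace-zero observation $PE(M)\ge 2\rho$; this is self-contained (no cited inequality needed) and conceptually explains the bound as a direct spectral comparison, at the cost of appealing to the monotonicity of the spectral radius. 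The paper's route is more elementary but hinges on the Ili\'c--Ba\v{s}i\'c result. For the right-hand inequality and the $K_2$ equality check your treatment coincides with the paper's.
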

	
	\begin{proof}
		Let $M$ be a connected graph on $p$ vertices. Ilic et al. \cite{Aleksandar Ilic} proved the inequality $2(p-1) \leq PE(M).$ By using Corollary \ref{enq}, $PE(M) \leq p(p-1) \Delta.$ Thus,
		\begin{equation}\label{pe_in_1}
			(p-1) \leq \dfrac{PE(M)}{2} \leq \dfrac{p(p-1)\Delta}{2}.
		\end{equation}
		Let $A(M)$ be an adjacency matrix of $M.$
		And $x$ be an eigenvector of $A(M)$ corresponding to eigenvalue $\beta'$ then,
		\begin{center}
			$Ax'=\beta' x'$
		\end{center}
		Write $x'=(x_1', x_2',x_3',...,x_p')^t$ where without loss of generality, $|x_1'|=\underset{ 1\leq i \leq p}\max|x_i'|.$ Now, 
		\begin{equation}
			|\beta'||x_i'|=\sum_{j=1}^{p} a_{1j}x_j' 
			\leq |x_1'|\sum_{j=1}^{p} a_{1j}
			\leq |x_1'| deg(v_i)	
			\leq |x_1'|  \Delta(v)
			\leq |x_1'| \Delta(v). 
		\end{equation} \\
		Taking summation over $\beta' \in Spec_M$ we get,	$E(M)\leq p\Delta.$ As $M$ is simple connected graph, $\Delta = (p-1).$ Thus,
		\begin{equation}\label{ee_in_1}
			E(G)\leq p(p-1).
		\end{equation}
		Combining equations \ref{pe_in_1} and \ref{ee_in_1} we get, $E(M) \leq \dfrac{p}{2} PE(M) \leq \dfrac{p^2(p-1)\Delta}{2}.$ 
		Further the equality holds if $M$ is $K_2.$
	\end{proof}

	\end{document}